\newtheorem{theorem}{Theorem}[section]
\newtheorem{lemma}[theorem]{Lemma}
\newtheorem*{Theorem1'}{Theorem 1'}
\theoremstyle{definition}
\newtheorem{definition}[theorem]{Definition}
\newtheorem{notation}[theorem]{Notation}
\theoremstyle{remark}
\newtheorem{remark}[theorem]{Remark}
\numberwithin{equation}{section}
\definecolor{darkgreen}{rgb}{0,0.5,0.2}
\newcommand \blue{\textcolor{blue}}
\newcommand \green{\textcolor{darkgreen}}
\newcommand \g{{\mathfrak g}}
\newcommand  \s{{\mathfrak s}}
\renewcommand \r{{\mathfrak r}}
\def \n{{\mathfrak n}}
\renewcommand \t{{\mathfrak t}}
\newcommand \gl{{\mathfrak {gl}}}
\renewcommand \sl{{\mathfrak {sl}}}
\newcommand \ad{\text{ad}}
\begin{document}

\title{Jordan-Chevalley decomposition in Lie algebras}

\author{Leandro Cagliero}
\address{CIEM-CONICET, FAMAF-Universidad Nacional de C\'ordoba, C\'ordoba, Argentina.}
\email{cagliero@famaf.unc.edu.ar}
\thanks{The first author was supported in part by CONICET and SECYT-UNC grants.}

\author{Fernando Szechtman}
\address{Department of Mathematics and Statistics, Univeristy of Regina, Canada}
\email{fernando.szechtman@gmail.com}
\thanks{The second author was supported in part by an NSERC discovery grant}

%    General info
\subjclass[2000]{Primary 17-08, 17B05; Secondary 20C40, 15A21}

%\date{January 1, 2001 and, in revised form, June 22, 2001.}

%\dedicatory{This paper is dedicated to our advisors.}

\keywords{Solvable Lie algebras, Jordan-Chevalley
decomposition, representations}

\begin{abstract}
We prove that if $\s$ is a solvable Lie algebra  of matrices over a field of characteristic 0,
and $A\in\s$,
then the semisimple and nilpotent summands of the Jordan-Chevalley decomposition of $A$ belong to $\s$
if and only if there exist $S,N\in\s$, $S$ is semisimple, $N$ is nilpotent (not necessarily $[S,N]=0$)
such that $A=S+N$.
\end{abstract}

\maketitle

\section{Introduction}
All Lie algebras and representations considered in this paper are finite dimensional over a field $\mathbb{F}$
of characteristic 0.
An important question concerning a given representation  $\pi:\g\to\gl(V)$
of a Lie algebra $\g$
is (*) whether $\pi(\g)$ contains the semisimple and nilpotent
parts of the Jordan-Chevalley decomposition (JCD) in $\gl(V)$ of $\pi(x)$ for a given $x\in\g$
(cf.\ Ch.\ VII, \S5 in \cite{Bo7}).
For semisimple Lie algebras, this is true for any representation and this
classic result is a cornerstone of the representation theory of semisimple Lie algebras (see \cite[\S 6.4 and  Ch. VI]{Hu} or 
\cite[\S 9.3 and Ch. 14]{FH}). We are interested in the classification of distinguished classes of indecomposable representations of certain families of non semisimple Lie algebras (see \cite{CS2,CS3}) and an extension of the classical result to more general Lie algebras will prove useful in this endeavour. In a different direction, the recent article \cite{Ki}, studies the existence of a Jordan-Chevalley-Seligman decomposition in prime characteristic.

The question (*) led us to study in \cite{CS} the existence and uniqueness of abstract
JCD's in arbitrary Lie algebras.
Recall that an element $x$ of a Lie algebra $\g$ is said to have an \emph{abstract JCD}
if there exist unique $s,n\in\g$ such that $x=s+n$, $[s,n]=0$ and
given any finite dimensional representation $\pi:\g\to\gl(V)$
the JCD of $\pi(x)$ in $\gl(V)$ is $\pi(x)=\pi(s)+\pi(n)$.
The Lie algebra $\g$ itself is said to have an abstract JCD
if everyone of its elements does.
The main results of \cite{CS} are Theorems 1 and 2 and they respectively state that
\emph{a Lie algebra has an abstract JCD if and only if it is perfect},
and
\emph{an element of a Lie algebra $\g$ has an abstract JCD if and only if it belongs to $[\g,\g]$.}
These theorems, though related to question (*), do not provide a satisfactory answer to it.

The purpose of this note is two-fold: on the one hand we prove Theorem \ref{thm.main}
below which addresses directly question (*) and allows us to derive
from it \cite[Theorems 1 and 2]{CS}.
On the other hand, we recently realized that there is a gap in the original
proof of \cite[Theorems 1 and 2]{CS}, since \cite[Lemma 2.1]{CS}) is not true.
Therefore, we leave \cite[Theorems 1 and 2]{CS} in good standing by giving a correct proof
derived from Theorem \ref{thm.main}.

\begin{theorem}\label{thm.main}
 Let $\s$ be a solvable Lie algebra of matrices, let $A\in\s$ and
 assume that $A=S+N$ with $S,N\in\s$, $S$ semisimple, $N$ nilpotent
 (we are not assuming $[S,N]=0$).
Then the semisimple and nilpotent summands of the JCD of $A$ belong to $\s$.
\end{theorem}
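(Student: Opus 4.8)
The plan is to analyze the situation via Lie's theorem, which applies since $\s$ is solvable and we work over characteristic $0$.The plan is to reduce to a triangular situation via Lie's theorem and then extract the semisimple part of $A$ by decomposing $N$ under the adjoint action of $S$. First I would pass to the algebraic closure $\overline{\mathbb{F}}$: the summands $A_s$ and $A_n$ of the JCD lie in $\mathbb{F}[A]$ and hence have entries in $\mathbb{F}$, while $\s=(\s\otimes_{\mathbb F}\overline{\mathbb F})\cap\gl(V)$, so it suffices to prove $A_s\in\s\otimes\overline{\mathbb F}$. Thus I may assume $\mathbb F$ is algebraically closed. By Lie's theorem $\s$ is simultaneously triangularizable; fixing such a basis, every element of $[\s,\s]$ is strictly upper triangular, and since $N$ is nilpotent and upper triangular it too is strictly upper triangular. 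Because $S$ is upper triangular, $\ad S$ preserves the spaces of upper triangular and of strictly upper triangular matrices.

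Next I would use that $S$ semisimple forces $\ad S$ to be a semisimple operator on $\gl(V)$, giving the weight decomposition $\gl(V)=\bigoplus_\alpha \gl(V)_\alpha$ with $\gl(V)_\alpha=\{X:[S,X]=\alpha X\}$ and $\gl(V)_0=C_{\gl(V)}(S)$. Writing $N=\sum_\alpha N_\alpha$ accordingly, each projection onto $\gl(V)_\alpha$ is a polynomial in $\ad S$ (Lagrange interpolation at the eigenvalues of $\ad S$), so each $N_\alpha$ is an iterated bracket combination $[S,[S,\dots]]$ of elements of $\s$ and therefore lies in $\s$. By the previous paragraph each $N_\alpha$ is strictly upper triangular, hence nilpotent; in particular the commuting component $N_0\in\s$ is a nilpotent matrix with $[S,N_0]=0$, so $S+N_0$ is already in Jordan--Chevalley form with semisimple part $S$.

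It remains to account for $N_+:=N-N_0=\sum_{\alpha\neq0}N_\alpha$, which carries no $\ad S$-weight-zero part. Here I would argue by induction, either on $\dim V$ using a minimal nonzero $\s$-invariant subspace $U$ furnished by Lie's theorem, or on the number of distinct eigenvalues of $S$. Since $A_s,A_n\in\mathbb F[A]$, every $A$-invariant (in particular every $\s$-invariant) subspace is $A_s$- and $A_n$-invariant, so the JCD of $A$ restricts and descends to the JCD of the induced operators on $U$ and on $V/U$; the inductive hypothesis then places the images of $A_s$ in the images of $\s$ in $\gl(U)$ and $\gl(V/U)$. The hard part will be the closing-up step: producing from these compatible pieces a single element of $\s$ equal to $A_s$ on the nose. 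Lifting an element of $\s$ that induces the quotient operator on $V/U$ only determines $A_s$ up to an element of $\mathrm{Hom}(V/U,U)$, and one must use semisimplicity of $A_s$ together with the commuting relation $[A_s,A_n]=0$ to pin down this correction and, crucially, to verify it remains inside the Lie algebra $\s$ rather than merely in the associative algebra generated by $\s$. Concretely this is where the iterative procedure of the abstract enters: the successive nonzero-weight corrections that deform $S$ into $A_s$ are built from brackets with the $N_\alpha$ and must be shown to terminate within $\s$; establishing that this process closes up in $\s$ is the crux of the argument.
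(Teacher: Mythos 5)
Your setup matches the paper's: reduce to $\overline{\mathbb F}$ (the summands of the JCD are polynomials in $A$, hence defined over $\mathbb F$, and $\s=\operatorname{span}_{\overline{\mathbb F}}\s\cap\gl(n,\mathbb F)$), triangularize $\s$ by Lie's theorem, decompose $N=\sum_\alpha N_\alpha$ into eigenmatrices of $\ad S$, and note each $N_\alpha\in\s$ because the projections are polynomials in $\ad S$. All of that is correct and is exactly the paper's preliminary material. But the proof stops precisely where the theorem actually lives. You write that ``the successive nonzero-weight corrections that deform $S$ into $A_s$ \ldots must be shown to terminate within $\s$; establishing that this process closes up in $\s$ is the crux of the argument'' --- and then you do not establish it. Two specific things are missing. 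First, when you replace $S$ by $S+N_{\lambda_0}$ for a nonzero weight $\lambda_0$, you must show the new matrix is still semisimple; this is not automatic (adding a nilpotent to a semisimple matrix generally destroys semisimplicity). The paper's Lemma \ref{lemma.exp} handles this by observing that $[S,X]=\mu X$ with $\mu\ne0$ forces $\exp(\mu^{-1}\ad X)(S)=S-X$, so $S-X$ is \emph{conjugate} to $S$, and the same automorphism transports the eigenmatrix decomposition of $N$ and keeps everything inside $\s$. Second, termination: after replacing $S$ by $S+N_{\lambda_0}$, the eigenmatrix decomposition of $N-N_{\lambda_0}$ with respect to the new $\ad(S+N_{\lambda_0})$ can acquire \emph{more} nonzero-weight components than before, so the naive count does not decrease. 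The paper introduces the per-diagonal count $\gamma_S(N)=(c_{S,1}(N),\dots,c_{S,n-1}(N))$ and proves (Lemma \ref{lemma.norm}, using that $\exp(\mu^{-1}\ad X)$ does not disturb the diagonals at or below the lowest nonzero diagonal of $X$) that $\gamma$ strictly decreases in lexicographic order, which is what forces the iteration to stop with $[S',N']=0$.

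Your alternative suggestion --- induction on $\dim V$ via a minimal $\s$-invariant subspace $U$ --- does not rescue the argument either: as you yourself observe, lifting compatible answers on $U$ and $V/U$ leaves an ambiguity in $\operatorname{Hom}(V/U,U)$, and you give no mechanism for choosing the correction inside the Lie algebra $\s$ (as opposed to the associative algebra it generates). So the proposal correctly isolates the difficulty but does not resolve it; the conjugation lemma and the lexicographic termination invariant are the two ideas that would need to be supplied.
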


This theorem is a consequence of the following result.

\begin{theorem}\label{thm.main1}
 Let $\mathbb{F}$ be algebraically closed.
 Given a square matrix $A=S+N$ with $S$ semisimple and $N$ nilpotent, let
 $\{S_n\}$ and $\{N_n\}$ be sequences defined inductively by
 \[
  S_0=S\quad\text{ and }\quad  N_0=N,
 \]
 and, if $[S_n,N_n]\ne0$,
 let  $(N_n)_{\lambda_n}$ be
 a non-zero eigenmatrix of $\ad(S_n)$
 corresponding to a non-zero eigenvalue $\lambda_n$ appearing in the  $ad(S_n)$-decomposition
 of $N_n$, and let
 \begin{equation}\label{eq.iteration}
    S_{n+1}=S_n+(N_n)_{\lambda_n}\quad{\text and }\quad  N_{n+1}=N_{n}-(N_n)_{\lambda_n}.
 \end{equation}
(The sequences depend on the choice of the non-zero eigenvalues.)

If $\{S,N\}$ generates a solvable Lie algebra $\s$, then (independently of the choice of the eigenvalues)
\begin{enumerate}[(i)]
 \item $S_n$ is semisimple, $N_n$ is nilpotent and $S_n,N_n\in\s$ for all $n$, and
  \item there is $n_0$ such that $[S_{n_0},N_{n_0}]=0$.
\end{enumerate}
In particular, $A=S_{n_0}+N_{n_0}$ is the Jordan-Chevalley
decomposition of $A$ with both components $S_{n_0},N_{n_0}\in\s$.
Moreover, if $\pi:\s\to\gl(V)$ is a representation such that $\pi(S)$ is semisimple and $\pi(N)$ is nilpotent
then  $\pi(A)=\pi(S_{n_0})+\pi(N_{n_0})$ is the Jordan-Chevalley
decomposition of $\pi(A)$.
\end{theorem}

\section{Jordan-Chevalley decomposition of upper triangular matrices}

This section is devoted to prove Theorem \ref{thm.main1} and thus we assume $\mathbb{F}$ algebraically closed.
Let $\t$ denote the Lie algebra of upper triangular $n\times n$ matrices over $\mathbb{F}$,
let $\t'=[\t,\t]$,
and let $\s$ be a Lie subalgebra of $\t$.

\begin{lemma}\label{lemma.exp}
Let $S,X,N\in\s$ and assume
that $\ad_{\s}(S)(N)=\lambda N$, with $\lambda\in \mathbb{F}$,
  and $\ad_{\s}(S)(X)=\mu X$, with $0\ne\mu\in \mathbb{F}$ (in particular, $X\in\t'$).
  Then
\[
 \exp\left(-\mu^{-1}\ad_{\s}(X)\right)(N) = \sum_{j=0}^{n-1} \frac{(-\mu)^{-j}}{j!}\;\ad_{\s}(X)^j(N)
\]
is an eigenmatrix of $\ad_{\s}(S+X)$ of eigenvalue $\lambda$ and it belongs to $\s$.
In particular, $S$ is semisimple if and only if $S+X$ is semisimple.
\end{lemma}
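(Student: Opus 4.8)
The plan is to realize the operator $T := \exp(\mu^{-1}\ad_\s(X))$ as a Lie algebra automorphism and read off both assertions from a single short computation. First I would record the parenthetical claim that $X\in\t'=\n$, the strictly upper triangular matrices: comparing diagonal parts in $[S,X]=\mu X$ gives $\mu\,d_0(X)=d_0([S,X])=0$, since the diagonal part of a commutator of upper triangular matrices vanishes; as $\mu\neq0$ this forces $d_0(X)=0$. Consequently $X$ is nilpotent and $\ad_\s(X)$ is a nilpotent operator — indeed $\ad(X)$ raises the superdiagonal index because $[\t_a,\t_b]\subseteq\t_{a+b}$, so $\ad(X)^n=0$ on $\t$ and the displayed sum (truncated at $j=n-1$) genuinely equals $\exp(\mu^{-1}\ad_\s(X))(N)$. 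In particular $\mu^{-1}\ad_\s(X)$ is a nilpotent derivation of $\s$, whence $T$ is a well-defined automorphism of $\s$ with inverse $\exp(-\mu^{-1}\ad_\s(X))$.

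The crux is the one-line computation $T(S)=S-X$: since $\ad(X)(S)=[X,S]=-\mu X$ and $\ad(X)^2(S)=[X,-\mu X]=0$, the series for $T(S)$ collapses to $S+\mu^{-1}(-\mu X)=S-X$. With this in hand the eigenmatrix claim is immediate, for applying the automorphism $T$ to the identity $[S,N]=\lambda N$ yields $[T(S),T(N)]=\lambda\,T(N)$, that is $[S-X,T(N)]=\lambda\,T(N)$, which is precisely $\ad_\s(S-X)\bigl(T(N)\bigr)=\lambda\,T(N)$. Membership $T(N)\in\s$ holds because each $\ad_\s(X)^j(N)\in\s$ ($\s$ being a subalgebra containing $X$ and $N$), and $T(N)\neq0$ because $T$ is invertible and $N\neq0$; thus $T(N)$ is a genuine eigenmatrix of $\ad_\s(S-X)$ of eigenvalue $\lambda$.

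Finally, for the ``in particular'' I would run the same computation at the matrix level. Set $g=\exp(\mu^{-1}X)$, a unipotent (hence invertible) matrix, legitimate because $X$ is nilpotent. Then $\mathrm{Ad}(g)=\exp(\mu^{-1}\ad(X))$ as operators on $\gl_n$ — the relation $[S,X]=\mu X$ holds equally in $\gl_n$ — so the collapse above reads $gSg^{-1}=\mathrm{Ad}(g)(S)=S-X$. Hence $S$ and $S-X$ are conjugate matrices and are therefore diagonalizable simultaneously, giving the equivalence. The only points that require care are the preliminary ones — verifying $X\in\t'$ (which uses $\mu\neq0$) and the ensuing nilpotency that makes $\exp(\mu^{-1}\ad_\s(X))$ a finite sum and an automorphism — after which both assertions drop out of the single identity $T(S)=S-X$; I anticipate no substantive obstacle beyond this bookkeeping.
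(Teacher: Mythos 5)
Your proof is correct, and while the first half is close in spirit to the paper's, the second half takes a genuinely different and arguably cleaner route. For the eigenmatrix claim, the paper verifies directly, term by term, that $\ad_{\s}(S)-\ad_{\s}(X)$ multiplies the truncated exponential sum by $\lambda$ (the coefficients $\tfrac{\mu^{-j}(\lambda+j\mu)}{j!}-\tfrac{\mu^{1-j}}{(j-1)!}=\tfrac{\lambda\mu^{-j}}{j!}$ do the work); you instead isolate the identity $T(S)=S-X$ for the automorphism $T=\exp(\mu^{-1}\ad_{\s}(X))$ and transport the relation $[S,N]=\lambda N$ through $T$, which explains conceptually why the computation closes up. The real divergence is in the ``in particular'' part: the paper works at the level of $\ad_{\t}$, showing that $T$ carries a basis of eigenmatrices of $\ad_{\t}(S)$ to one for $\ad_{\t}(S-X)$, and then needs the auxiliary fact that $A\in\t$ is diagonalizable if and only if $\ad_{\t}(A)$ is (proved via uniqueness of the JCD and the vanishing of the centralizer of $\t$ in $\t'$). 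You bypass all of that by descending to the matrix level: since $X$ is nilpotent, $g=\exp(\mu^{-1}X)$ is unipotent, $\mathrm{Ad}(g)=\exp(\mu^{-1}\ad(X))$, and the same collapse gives $gSg^{-1}=S-X$, so the two matrices are conjugate and hence simultaneously diagonalizable or not. Your argument buys a shorter, more elementary proof of the equivalence; the paper's $\ad_{\t}$-level formulation has the side benefit of producing explicit eigenbases used later in the \texttt{NewEigM} procedure. One cosmetic point: the conclusion that $T(N)$ is a \emph{nonzero} eigenmatrix of course presupposes $N\ne0$, which the lemma's statement tacitly assumes (the paper later explicitly allows degenerate inputs in Remark \ref{rmk.zeros}).
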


\begin{proof}
Since $X\in\t'$, we see that $-\mu^{-1}\ad_{\s}(X)$ is a nilpotent derivation of $\s$,
so $\exp\left(-\mu^{-1}\ad_{\s}(X)\right)\in\text{Aut}(\s)$. In particular, $\exp\left(-\mu^{-1}\ad_{\s}(X)\right)(N)\in\s$
and
$$
\begin{aligned}
\left[\exp\left(-\mu^{-1}\ad_{\s}(X)\right)(S),\exp\left(-\mu^{-1}\ad_{\s}(X)\right)(N)\right] &=\exp\left(-\mu^{-1}\ad_{\s}(X)\right)([S,N])\\
&=\lambda \exp\left(-\mu^{-1}\ad_{\s}(X)\right)(N).
\end{aligned}
$$
But $[S,X]=\mu X$ yields $\exp\left(-\mu^{-1}\ad_{\s}(X)\right)(S)=S+X$, so $\exp\left(-\mu^{-1}\ad_{\s}(X)\right)(N)$
is an eigenmatrix of $\ad_{\s}(S+X)$ of eigenvalue $\lambda$. Consequently, if $\ad_{\t}(S)$ is semisimple then
$\exp\left(-\mu^{-1}\ad_{\t}(X)\right)$ transforms a basis of eigenmatrices of $\ad_{\t}(S)$
into a basis of eigenmatrices of $\ad_{\t}(S+X)$.

To complete the proof  it is sufficient to show that
a matrix $A\in\t$ is semisimple if and only if $\ad_{\t}(A)$ is semisimple.
The `only if' part is clear. Conversely,
if $\ad_{\t}(A)$ is semisimple and $A=A_s+A_n$ is the JCD of $A$ then
$A_s,A_n\in\t$ (both are polynomials in $A$) and it follows that
$\ad_{\t}(A)=\ad_{\t}(A_s)+\ad_{\t}(A_n)$ is the JCD of
$\ad_{\t}(A)$. By uniqueness, $\ad_{\t}(A_n)=0$ and this implies $A_n=0$ since
$A_n\in \t'$ and the centralizer of $\t$ in $\t'$ is 0.
\end{proof}

Let $S\in\s$ be semisimple. Let $\Lambda$ be the set of eigenvalues of $\ad_\s(S)$
and for each $\lambda\in\Lambda$ let $\s_\lambda\subset \s$
be the corresponding eigenspace. Given $N\in\s$, let
\[
N=\sum_{\lambda\in\Lambda} N_\lambda,
\]
where each $N_\lambda\in\s_\lambda$. We refer to the above as the $\ad_\s(S)$-decomposition of $N$.

For $k=0,\dots,n-1$, let $\t_k$ be the subspace of $\t$ consisting of those matrices
whose non-zero entries lay only on the diagonal $(i,j)$ such that $j-i=k$.
Given $N\in\t$, let $d_k(N)\in\t_k$ be defined so that $N=\sum_{k=0}^{n-1} d_k(N)$.
 We now introduce a function that will used to measure how close  two matrices are to commuting with each other.
 \begin{definition}\label{def.c}
 Let $S,N\in\s$, with $S$ semisimple, and let
$N=\sum_{\lambda\in\Lambda} N_{\lambda}$ be the decomposition of $N$ as a
sum of eigenmatrices of $\ad_{\s}(S)$.
For $k=0,\dots,n-1$, let
\[
 C_{S,k}(N)=\{\lambda\in\Lambda:\lambda\ne0 \text{ and } d_{k}(N_\lambda)\ne0\},
\]
let $c_{S,k}(N)$ be the number of elements in $C_{S,k}(N)$
($c_{S,0}(N)=0$ since $\lambda\ne0\Rightarrow N_\lambda\in\t'$) and let
  \[
   \gamma_S(N)=\big(c_{S,1}(N),\dots,c_{S,n-1}(N)\big)\in\mathbb{Z}_{\ge0}^{n-1}.
  \]
  It is clear that $c_{S,k}(N)\le \dim\s$ for all $k$ and $[S,N]=0$ if and only if $\gamma_S(N)=(0,\dots,0)$.
  \end{definition}

 \begin{lemma}\label{lemma.c}
 Let $S,X,N\in\s$  with $S$ semisimple and $\ad_{\s}(S)(X)=\mu X$, with $0\ne\mu\in \mathbb{F}$.
 Let $k_0\ge1$ be the lowest $k$ such that $d_k(X)\ne0$ ($\mu\ne0$ implies $X\in\t'$ and hence $k_0\ge1$).
 Then
\[
    C_{S+X,k}(N) =    C_{S,k}(N)
\]
for all $k\le k_0$.
 \end{lemma}

\begin{proof}
 We first point out that it follows from Lemma \ref{lemma.exp} that $S+X$ is semisimple and thus it makes sense
 to consider $C_{S+X,k}(N)$.

Let
\[
 N=\sum_{\lambda\in\Lambda} N_{\lambda},\qquad N_{\lambda}\in\s,
\]
be the $\ad_{\s}(S)$-decomposition of $N$.
Let
\[
\tilde N_{\lambda,0} = \exp(-\mu^{-1} \ad_{\s}(X)) (N_{\lambda})
\]
and, for $j\ge1$, let
$\tilde N_{\lambda,j} = \frac{\mu^{-j}}{j!} \ad_{\s}(X)^j(\tilde N_{\lambda,0})$.

It follows from Lemma \ref{lemma.exp} that $\tilde N_{\lambda,j}$
is an eigenmatrix of $\ad_{\s}(S+X)$ of eigenvalue $\lambda+j\mu$.
Since
\begin{align*}
 N_{\lambda}
 &= \exp(\mu^{-1} \ad_{\s}(X)) (\tilde N_{\lambda,0} ) \\
 &=  \tilde N_{\lambda,0} +\tilde N_{\lambda,1} +\tilde N_{\lambda,2} + \dots
\end{align*}
it follows that
\begin{equation*}\label{eq-enie}
 N=\sum_{\lambda\in\Lambda} \sum_{j\ge0} \tilde N_{\lambda,j}
  =\sum_{\lambda\in\Lambda} \tilde N_{\lambda,0}\;+\;\sum_{\lambda\in\Lambda} \sum_{j\ge1} \tilde N_{\lambda,j}
\end{equation*}
and this leads to the decomposition of $N$ as a
sum of eigenmatrices of $\ad_{\s}(S+X)$ (after adding up those $\tilde N_{\lambda,j}$ with the same eigenvalue).

Let $k\le  k_0$ (recall that $k_0$ is the lowest $k$ such that $d_k(X)\ne0$).
Since $k_0\ge1$, it follows that
\[
 d_k(\tilde N_{\lambda,j})=\begin{cases}
                            d_k(N_{\lambda}),&\text{ if $j=0$};\\
                            0,&\text{ if $j\ge1$}.
                           \end{cases}
\]
This implies $C_{S+X,k}(N) =  C_{S,k}(N)$.
\end{proof}

 \begin{lemma}\label{lemma.norm}
   Let $S,N\in\s$, with $S$ semisimple, and let
$N=\sum_{\lambda\in\Lambda} N_{\lambda}$ be the $\ad_{\s}(S)$-decomposition of $N$.
Assume that there is $\lambda_0\in\Lambda$ with $\lambda_0\ne0$ such that $N_{\lambda_0}\in\s_{\lambda_0}$ is non-zero.
Then
\[
   \gamma_{S+N_{\lambda_0}}(N-N_{\lambda_0}) <   \gamma_{S}(N)
\]
in the lexicographical order. (The pair $(S+N_{\lambda_0},N-N_{\lambda_0})$ is closer to commuting than the pair $(S,N)$.)
 \end{lemma}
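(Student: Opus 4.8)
The plan is to understand how the quantity $\gamma_S(N)$ changes when we replace the pair $(S,N)$ by $(S+N_{\lambda_0},\,N-N_{\lambda_0})$, where $N_{\lambda_0}$ is an eigenmatrix of $\ad_\t(S)$ of nonzero eigenvalue $\lambda_0$. Since $\lambda_0\neq0$ forces $N_{\lambda_0}\in\t'$, the matrix $X:=N_{\lambda_0}$ is exactly the kind of eigenmatrix to which Lemma \ref{lemma.exp} and the procedure \texttt{NewEigM} apply, with the rôle of $S-X$ there played here by $S-(-X)=S+N_{\lambda_0}$ (i.e.\ we run the procedure with $X$ replaced by $-N_{\lambda_0}$, whose eigenvalue under $\ad_\t(S)$ is $\mu=\lambda_0\neq0$). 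So the first step is to invoke \texttt{NewEigM} to produce the decomposition of $N$ (equivalently of $N-N_{\lambda_0}$, since the $\lambda_0$-component will recombine) into eigenmatrices of $\ad_\t(S+N_{\lambda_0})$, and then to compare, coordinate by coordinate in $k=1,\dots,n-1$, the counts $c_{S+N_{\lambda_0},k}$ against $c_{S,k}$.

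The main tool for the comparison is Remark \ref{rmk.diagonal}, applied with $X=-N_{\lambda_0}$. Let $k_0$ be the smallest $k$ with $d_k(N_{\lambda_0})\neq0$; because $N_{\lambda_0}\in\t'$ we have $k_0\ge1$. I would argue that for every $k<k_0$ the map $d_k$ is unaffected by applying $\exp(\mu^{-1}\ad_\t(X))$ (Remark \ref{rmk.diagonal}), so the counts $c_{S+N_{\lambda_0},k}$ and $c_{S,k}$ agree for all $k<k_0$; this pins down the first $k_0-1$ coordinates of the two $\gamma$-vectors to be equal. The decisive step is at $k=k_0$: here I must show $c_{S+N_{\lambda_0},k_0}(N-N_{\lambda_0}) < c_{S,k_0}(N)$. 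The point is that in the original decomposition the eigenmatrix $N_{\lambda_0}$ contributes to the count $c_{S,k_0}(N)$ because $d_{k_0}(N_{\lambda_0})\neq0$ and $\lambda_0\neq0$; after the transformation the $\lambda_0$-eigenspace data is absorbed, and by the diagonal-preservation in Remark \ref{rmk.diagonal} no new nonzero $d_{k_0}$-contribution in a nonzero-eigenvalue slot can be created at level $k_0$. Thus exactly this one contribution is removed at level $k_0$ without introducing a replacement, strictly dropping the count by at least one.

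Concretely, I would establish a bijection, or at least an injection with one element unmatched, between the set $\{A\in\texttt{Mat}(\text{EigM}_{S+N_{\lambda_0}}):d_{k_0}(A)\neq0\}$ and the analogous set for the original decomposition of $N$, using the second displayed identity of Remark \ref{rmk.diagonal} (the $d_k$-preservation under $\exp$) to match diagonals at level $k_0$, and then observing that the component coming from $N_{\lambda_0}$ itself no longer appears as a separate nonzero-eigenvalue eigenmatrix on the $S+N_{\lambda_0}$ side since its eigenvalue has become $0$. Once $c_{S+N_{\lambda_0},k}=c_{S,k}$ for $k<k_0$ and $c_{S+N_{\lambda_0},k_0}<c_{S,k_0}$ are both in hand, the strict inequality $\gamma_{S+N_{\lambda_0}}(N-N_{\lambda_0})<\gamma_S(N)$ in the lexicographical order follows immediately from the definition.

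I expect the genuine obstacle to be the bookkeeping at level $k_0$: one must rule out the possibility that the transformation, while removing the $N_{\lambda_0}$ contribution, simultaneously creates a \emph{new} nonzero-eigenvalue eigenmatrix whose $d_{k_0}$-part is nonzero, which would spoil the strict drop. The resolution is precisely Remark \ref{rmk.diagonal}: it guarantees both that the number of $d_{k_0}$-nonzero matrices is governed by the first loop only, and that the $\text{EigM}'_S$ correction terms carry no $d_k$-content for $k\le k_0$, so no spurious level-$k_0$ contribution can be manufactured. Making this matching rigorous—correctly accounting for the \texttt{Collect} step, which may merge several matrices sharing an eigenvalue—is where the care is needed; everything else is a routine unwinding of the definitions of $c_{S,k}$ and $\gamma_S$.
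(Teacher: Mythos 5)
Your proposal follows essentially the same route as the paper's proof: run \texttt{NewEigM} with $X=-N_{\lambda_0}$ and $\mu=\lambda_0$, invoke Remark \ref{rmk.diagonal} to see that the counts at levels $k\le k_0$ (with $k_0$ the lowest nonzero diagonal of $N_{\lambda_0}$) are not disturbed by passing to eigenmatrices of $\ad_{\t}(S+N_{\lambda_0})$, and note that the $\lambda_0$-contribution at level $k_0$ disappears, forcing the strict lexicographic drop. The paper merely organizes the same content as two displayed facts, namely $\gamma_{S}(N-N_{\lambda_0})<\gamma_{S}(N)$ via equation \eqref{eq.c1} and $c_{S+N_{\lambda_0},k}(N-N_{\lambda_0})=c_{S,k}(N-N_{\lambda_0})$ for $k\le k_0$.
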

\begin{proof}
Let $k_0$ be the lowest $k$ such that $d_k(N_{\lambda_0})\ne0$ ($k_0\ge1$ since $N_{\lambda_0}\in\t'$).
It is clear that
\begin{equation}\label{eq.c1}
c_{S,k}(N-N_{\lambda_0})=
 \begin{cases}
  c_{S,k}(N),&\text{if $k<k_0$}; \\
  c_{S,k_0}(N)-1,&\text{if $k=k_0$};
 \end{cases}
\end{equation}
and thus  $\gamma_{S}(N-N_{\lambda_0}) <   \gamma_{S}(N)$.

 It follows from Lemma \ref{lemma.c} that, for $ k\le k_0$,
 \[
c_{S+N_{\lambda_0},k}(N-N_{\lambda_0})=c_{S,k}(N-N_{\lambda_0}),
\]
and this, combined with \eqref{eq.c1}, implies   $\gamma_{S+N_{\lambda_0}}(N-N_{\lambda_0}) <   \gamma_{S}(N)$
in the lexicographical order.
\end{proof}

We are now in a position to prove Theorem \ref{thm.main1}.

\begin{proof}[Proof of Theorem \ref{thm.main1}]
Since  $\{S,N\}$ generates a solvable Lie algebra $\s$, and $\mathbb{F}$ is algebraically closed,
it follows from Lie's Theorem that we may assume $S,N\in\s\subset\t$ and, since $N$ is nilpotent,  $N\in\t'$.

We will prove (i) by induction. Assume (i) true for $S_{n}$ and $N_{n}$ and let us suppose that
$[S_n,N_n]\ne0$.
Since $\lambda_n\ne0$, we have $(N_n)_{\lambda_n}\in\t'$ and hence  $N_{n+1}$ is nilpotent.
It follows from Lemma \ref{lemma.exp}  that $S_{n+1}$ is semisimple and $S_{n+1},N_{n+1}\in\s$.
This proves (i).

It follows from Lemma \ref{lemma.norm} that
 \[
  \gamma_{S_{n+1}}(N_{n+1})= \gamma_{S_n+(N_n)_{\lambda_n}}(N_n-(N_n)_{\lambda_n}) <   \gamma_{S_n}(N_n)
\]
in the lexicographical order.
This implies that there exists $n_0$ such that  $\gamma_{S_{n_0}}(N_{n_0})=0$ and hence  $[S_{n_0},N_{n_0}]=0$.
This proves (ii) and it is clear $A=S_{n_0}+N_{n_0}$ is the Jordan-Chevalley
decomposition of $A$.

Finally, let $\pi:\s\to\gl(V)$ be a representation such that $\pi(S)=\pi(S_0)$ is semisimple and $\pi(N)=\pi(N_0)$ is nilpotent.
Since $\pi$ is a representation, if
$N_n=\sum_{\lambda\in\Lambda_n} (N_n)_{\lambda}$ is the  $\ad_{\s}(S_n)$-decomposition of $N_n$, then
\[
\pi(N_n)=\sum_{\lambda\in\Lambda_n} \pi((N_n)_{\lambda})
\]
is the $\ad_{\pi(\s)}(\pi(S_n))$-decomposition of $\pi(N_n)$.
Therefore, assuming that $\pi(S_n)$ is semisimple and $\pi(N_n)$ is nilpotent, it follows, just as above, that
 $\pi(S_{n+1})$ is semisimple and $\pi(N_{n+1})$  is nilpotent.
 This implies that  $\pi(A)=\pi(S_{n_0})+\pi(N_{n_0})$ is the Jordan-Chevalley
decomposition of $\pi(A)$.
\end{proof}

\begin{proof}[Proof of Theorem \ref{thm.main}]
 Theorem \ref{thm.main1} shows that
Theorem \ref{thm.main} is true when $\mathbb{F}$ is algebraically closed,
since in this case Lie's Theorem allows us to assume that  $\s\subset\t$.

In general, let $\bar{\mathbb{F}}$ be
an algebraic closure of $\mathbb{F}$.
Suppose $A,S,N\in\s$, where $A=S+N$, $S$ is semisimple and $N$ is nilpotent.
Let $A=S'+N'$ be the JCD of $A$ in $\gl(n,\mathbb{F})$, as ensured in \cite[\S 7.5]{HK}.
The minimal polynomial of $S'$, say $p$, is a product of distinct monic
irreducible polynomials over $\mathbb{F}$ \cite[\S 7.5]{HK}.
Since $\mathbb{F}$ has characteristic 0, it follows that $p$ has distinct roots in $\bar{\mathbb{F}}$,
whence $S'$ is diagonalizable over $\bar{\mathbb{F}}$.
Therefore, $A=S'+N'$ is the JCD of $A$ in $\gl(n,\bar{\mathbb{F}})$. Let $\bar{\s}$ be the $\bar{\mathbb{F}}$-linear span of
$\s$ in $\gl(n,\bar{\mathbb{F}})$. Then $\bar{\s}$ is a solvable subalgebra of $\gl(n,\bar{\mathbb{F}})$.
As the theorem is true over $\bar{\mathbb{F}}$,
we infer $S',N'\in\bar{\s}$.
Thus  $S',N'\in\gl(n,\mathbb{F})\cap\bar{\s}=\s$.
This completes the proof of Theorem \ref{thm.main}.
\end{proof}

\section{Jordan-Chevalley decomposition in a Lie algebra}

\begin{theorem}
An element $x$ of a Lie algebra $\g$ has an abstract JCD
if and only if $x$ belongs to the derived algebra $[\g,\g]$, in which case
the semisimple and nilpotent parts of $x$ also belong to $[\g,\g]$.
\end{theorem}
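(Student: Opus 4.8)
The plan is to establish the two implications separately; the forward direction should require none of the machinery of Section 2, while the converse is where Theorem \ref{thm.main} does the work.

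For necessity, suppose $x=s+n$ is an abstract JCD, and let $p\colon\g\to\mathfrak a:=\g/[\g,\g]$ be the projection onto the abelian quotient $\mathfrak a$. Since $\mathfrak a$ is abelian, any linear map $\rho\colon\mathfrak a\to\gl(W)$ with commuting image is a representation, and $\rho\circ p$ is then a representation of $\g$; by the defining property of the abstract JCD, $\rho(p(s))$ must be semisimple and $\rho(p(n))$ nilpotent. Taking $\rho$ to be a linear functional (so $\dim W=1$) not vanishing on $p(n)$ would make $\rho(p(n))$ a nonzero nilpotent scalar, which is absurd; hence $p(n)=0$. Symmetrically, if $p(s)\ne0$ I can define $\rho\colon\mathfrak a\to\gl(2,\mathbb F)$ by sending $p(s)$ to a single nilpotent Jordan block and a complementary subspace to $0$ (the image is spanned by one matrix, so it commutes), forcing $\rho(p(s))$ to be a non-semisimple matrix, again a contradiction; hence $p(s)=0$. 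Thus $s,n\in\ker p=[\g,\g]$, which proves both that $x=s+n\in[\g,\g]$ and the ``in which case'' clause.

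For sufficiency, let $x\in[\g,\g]$. By Ado's theorem I fix a faithful representation and regard $\g\subseteq\gl(V)$, and --- exactly as in the proof of Theorem \ref{thm.main} --- I pass to an algebraic closure and descend at the end, so I may assume $\mathbb F$ algebraically closed. The only candidates for the abstract parts are the matrix Jordan--Chevalley parts $x=x_s+x_n$ in $\gl(V)$, so there are two things to show: (a) $x_s,x_n\in[\g,\g]$, and (b) that $x_s$ and $x_n$ are semisimple/nilpotent not just as matrices but in every representation. For (a) I would use the Levi decomposition $\g=\s\ltimes\r$ together with the vector-space identity $[\g,\g]=\s\oplus[\g,\r]$, where $[\g,\r]$ lies in the radical (in fact in the nilradical). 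Writing $x=\sigma+\nu$ with $\sigma\in\s$ and $\nu\in[\g,\r]$, the Levi factor $\s$ is perfect, so by the classical Jordan decomposition in semisimple Lie algebras of matrices $\sigma$ has parts $\sigma_s,\sigma_n\in\s\subseteq[\g,\g]$, and these stay abstractly semisimple/nilpotent for $\g$ since every representation of $\g$ restricts to one of $\s$. The solvable contribution of $\nu$ is precisely where Theorem \ref{thm.main} applies, and for (b) I would invoke that the procedures of Section 2 commute with representations (Remark \ref{rmk.Representation2}): Lie's theorem puts $\r$ in upper-triangular form in an arbitrary representation, reducing the all-representations claim to the triangular computation already carried out.

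The hard part will be reconciling the Levi and radical contributions in (a). The naive guess $x=\sigma_s+(\sigma_n+\nu)$ does not work, because $\sigma_n$ and $\nu$ need neither commute nor sum to a nilpotent element, and $\sigma_s$ need not centralize $\nu$. I expect the real work to be a decomposition of $\g$ into $\ad(\sigma_s)$-eigenspaces followed by an iterative correction of $\sigma_n+\nu$ --- in the spirit of \texttt{NewEigM} and \texttt{JC\_D} applied to the solvable algebra attached to the radical part --- after which finitely many nilpotent corrections assemble into a single abstractly nilpotent element of $[\g,\g]$ commuting with $\sigma_s$. Checking that each correction remains inside $[\g,\g]$, and that the final summands are abstractly (not merely matrix-theoretically) semisimple and nilpotent, is the crux; once this is in hand, uniqueness of the abstract JCD is automatic.
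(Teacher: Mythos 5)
Your necessity argument is correct and in fact slightly more complete than the paper's one-line version: the paper only observes that one-dimensional representations killing $[\g,\g]$ force $x\in[\g,\g]$, whereas you also extract $s,n\in[\g,\g]$ directly from representations of the abelian quotient. The sufficiency direction has the right skeleton (Ado, Levi decomposition $\g=\g_{s}\ltimes\r$, the identity $[\g,\g]=\g_{s}\oplus[\g,\r]$ with $[\g,\r]$ nilpotent, and an appeal to Theorem \ref{thm.main} plus Remark \ref{rmk.Representation2}), but it stops exactly at the point you yourself call ``the crux,'' and the obstacle you raise there is not actually an obstacle.

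Concretely: writing $x=\sigma+\nu$ with $\sigma\in\g_{s}$, $\nu\in\n:=[\g,\r]$, and $\sigma=\sigma_s+\sigma_n$ the (classical, semisimple-Lie-algebra) Jordan decomposition inside the Levi factor, the ``naive guess'' $S=\sigma_s$, $N=\sigma_n+\nu$ is precisely the paper's choice, and it does satisfy the hypotheses of Theorem \ref{thm.main}. Your objection that $\sigma_n+\nu$ ``need not sum to a nilpotent element'' is wrong: set $\mathfrak{a}=\mathbb{F}\sigma_s\oplus\mathbb{F}\sigma_n\oplus\n$. Since $[\sigma_s,\sigma_n]=0$ and $[\mathfrak{a},\mathfrak{a}]\subset\n$ with $\n$ nilpotent, $\mathfrak{a}$ is solvable; over $\bar{\mathbb{F}}$ Lie's Theorem triangularizes it, and any nilpotent matrix in upper triangular form is strictly upper triangular, so $\sigma_n$ and $\nu$ (the latter nilpotent by \cite[Lemma C.19]{FH}) both land in $\t'$ and their sum is nilpotent. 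Theorem \ref{thm.main} is stated exactly for the case $[S,N]\ne0$, so no further ``reconciling'' or bespoke iterative correction is needed: it already yields $S',N'\in\mathfrak{a}\subset[\g,\g]$. The remaining all-representations claim then follows as you anticipated: for any $\pi$, $\pi(\sigma_s)$ is semisimple and $\pi(\sigma_n)$ nilpotent because $\g_{s}$ is semisimple, $\pi(\sigma_n+\nu)$ is nilpotent by Lie's Theorem applied to the solvable $\pi(\mathfrak{a})$, and Remark \ref{rmk.Representation2} gives $\texttt{JC\_D}(\pi(S),\pi(N))=\pi(\texttt{JC\_D}(S,N))$. As written, your proof defers this entire assembly to an unexecuted ``iterative correction,'' so it is incomplete; the fix is simply to verify the nilpotence of $\sigma_n+\nu$ as above and apply Theorem \ref{thm.main} verbatim.
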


\begin{proof}[Necessity.] This is clear since any linear
map from $\g$ to $\gl(V)$ such that $\dim\pi(\g)=1$ and $\pi([\g,\g])=0$
is a representation.

\smallskip

\noindent{\it Sufficiency.}
By Ado's theorem we may assume that $\g$ is a Lie algebra of matrices.
 Fix a Levi decomposition
$\g=\g_{s}\ltimes \r$ and let $\n=[\g,\r]$.
We know that $\n$ is nilpotent (see \cite[Lemma C.20]{FH}).
If $x\in [\g,\g]$, then $x=a+r$ for unique
$a\in\g_{s}$ and $r\in \n$.
If $a=a_s+a_n$ is the JCD of the matrix $a$,
since $\g_{s}$ is semisimple, it follows that $a_s,a_n\in \g_{s}=[\g_{s},\g_{s}]$ (see, for instance, \cite[\S6.4]{Hu}).
Let $\s=\mathbb{F} a_s\oplus \mathbb{F} a_n\oplus \n\subset[\g,\g]$.
Since $[\s,\s]\subset\n$ and $\n$ is nilpotent, we obtain that $\s$ is a solvable Lie algebra.
We now apply Theorem \ref{thm.main} to the Lie algebra
$\s$ with $S=a_s$, $N=a_n+r$.
We obtain that if $x=S'+N'$ is the JCD of $x$, then $S',N'\in\s\subset[\g,\g]$.

Finally, let $\pi:\g\to\gl(V)$ be a representation of $\g$.
Since $r\in\n$ it follows that $\pi(r)$ is nilpotent (see \cite[Lemma C.19]{FH} or  \cite[Ch.1, \S5]{Bo1}).
Since $\g_{s}$ is semisimple, $\pi(S)=\pi(a_s)$ is semisimple and $\pi(a_n)$ is nilpotent.
Since $\s$ is solvable it follows from Lie's Theorem that $\pi(N)=\pi(a_n+r)$ is nilpotent.
It follows from Theorem \ref{thm.main1} (applied over a field extension of $\mathbb{F}$) that
$\pi(x)=\pi(S')+\pi(N')$ is the JCD of $\pi(x)$.
\end{proof}

%==================================================

\end{document}